\author{Chantal Klinkhammer$^{\ast}$ \hspace{1cm} Robert Plato\footnote{Department of Mathematics, University of Siegen, 57068 Siegen, Germany}}
\newcommand{\lVert}{\left\|}
\newcommand{\rVert}{\right\|}
\newtheorem{theorem}{Theorem}[section]
\newtheorem{lemma}[theorem]{Lemma}
\newtheorem{cor}[theorem]{Corollary}
  \theoremstyle{definition}
\newtheorem{remark}[theorem]{Remark}
\newtheorem{defn}[theorem]{Definition}
\newtheorem{assumption}[theorem]{Assumption}
\newcommand{\rme}{\mathrm{e}}
\newcommand{\rmd}{\mathrm{d}}
\newcommand{\Or}{\mathrm{O}}
\begin{document}
\date{}
\title{Analysis of the discrepancy principle for
Tikhonov regularization under low order source conditions}
\maketitle


\begin{abstract}
We study the application of Tikhonov regularization to ill-posed nonlinear operator equations. The objective of this work is to prove low order convergence rates for the discrepancy principle under low order source conditions of logarithmic type. 
We work within the framework of Hilbert scales and extend existing studies on this subject to the oversmoothing case. 
The latter means that the exact solution of the treated operator equation does not belong to the domain of definition of the penalty term. As a consequence, the Tikhonov functional fails to have a finite value.
\end{abstract}

%


\section{Introduction}

Suppose that the solution of an ill-posed operator equation is not as smooth as a solution obtained by regularization. Does the regularization method still yield order optimal convergence rates? 
We answer this question for a particular setting that is specified below. 

In what follows, $X$ and $Y$ are infinite-dimensional Hilbert spaces with norms $\| \cdot \|$. 
The norms of the spaces may be different; however, if the pertaining space can be identified from the context, we omit the indices subsequently. 
We consider the operator equation
\begin{equation}
Fu=f^{\dagger}\label{E:OperatorEquation}
\end{equation}
for a nonlinear operator $F:\mathcal{D}(F)\rightarrow Y,$ with domain $\mathcal{D}(F)\subset X$.
Thereby, the right-hand side $f^{\dagger}$ is approximately given by noisy observations $f^{\delta} \in Y$ satisfying
\[
\| f^{\delta}-f^{\dagger}\| \leq \delta
\]
for some deterministic noise level $\delta\geq 0$.
We assume that a solution $u^{\dagger}\in \mathcal{D}(F)$ of the
operator equation (\ref{E:OperatorEquation}) exists. In addition, the equation is supposed to be at least locally ill-posed at $u^{\dagger}$.
The concept of local ill-posedness is understood as in \cite{HofmannPlato2018}.

An appropriate way to find approximate solutions for $u^{\dagger}$ is by applying the following version of Tikhonov regularization.
For a regularization parameter $\alpha>0$ and a given initial guess $\overline{u}$, we minimize the functional
\[
T_{\alpha}^{\delta}(u):=\left\| Fu-f^{\delta}\right\|^{2}+\alpha\left\| u-\overline{u}\right\|_{1}^{2},
\]
subject to $u\in \mathcal{D}(F)$.
The minimizers, denoted by $u_{\alpha}^{\delta}$, provide regularized solutions to $u^{\dagger}$.
The norm $\lVert \cdot \rVert_{1}$, occurring in the penalty term of $T_{\alpha}^{\delta}$, is assumed to be stronger than the norm in $X$, and it is the norm equipping a densely defined subspace $X_{1}\subset X$. We focus on the instance of an oversmoothing penalty term, which is evoked by $u^{\dagger}\notin X_{1}$ such that $T_{\alpha}^{\delta}(u^{\dagger})=\infty$.

In this setting, studies have established convergence rates for Tikhonov regularization with H\"older type source conditions; see \cite{HofmannMatheApriori2018} for an a priori parameter choice and \cite{HofmannMathe2017} for an a posteriori parameter choice of $\alpha$. The present study examines the case of low order source conditions. As a less restrictive smoothness assumption, this type of source condition has been gaining interest since its treatment in \cite{Hohage1999}. The work in \cite{HofmannPlato2020} presents convergence rates for low order source conditions and an a priori parameter choice. We extend these studies by providing results for an a posteriori parameter choice of $\alpha$. More precisely, we use the discrepancy principle. 

We begin the next section by presenting the concept of Hilbert scales, which allows us to formulate a smoothing property of the operator $ F $.
The section also presents the main assumptions, including the smoothing property, and specifies the source condition.
In the third section, we introduce auxiliary elements. These are useful since the standard estimate $T_{\alpha}^{\delta}(u_{\alpha}^{\delta})\leq T_{\alpha}^{\delta}(u^{\dagger})$
cannot be utilized in proofs if oversmoothing penalty terms are involved.
In Section \ref{S:DiscrepancyPrinciple}, we examine the discrepancy principle,
and in Section \ref{S:MainResult}, we present a proof of our main result.
In Section \ref{S:NumericalResults}, we finish this paper with numerical results, which
confirm the statement of the main theorem.
\section{Prerequisites}\label{S:Prerequisites}
This section consists of three parts. In the first part, we introduce Hilbert scales. Afterwards, in Subsection \ref{SS:Asssumptions} and Subsection \ref{SS:Source condition}, 
{the main assumptions and the source condition are stated, respectively. The latter can be considered as an assumption regarding the solution smoothness.
\subsection{Hilbert scales}
In our framework, we can simplify the definition of Hilbert scales as follows:
Let
\[
B:\mathcal{D}(B)\to X
\] 
be a selfadjoint, unbounded linear operator with dense domain $\mathcal{D}(B)\subset X$.
The condition
\begin{equation}\label{E:boundedness of B inverse}
\lVert Bu\rVert \geq k\lVert u\rVert,
\end{equation}
for $u\in \mathcal{D}(B)$
{,} 
and a positive constant $k$, ensures that $B$ is strictly positive and has a bounded inverse.
We set
\begin{equation*}
X_{\tau}:=
X=
\begin{cases}
\mathcal{D}(B^{\tau})& \quad \text{for } \tau >0\\
X& \quad \text{for } \tau \leq 0\\
\end{cases}
\end{equation*}
with norms $\lVert u \rVert_{\tau}=\lVert B^{\tau}u \rVert$ for $u \in X_{\tau}$ and call the system of spaces $\left(X_{\tau}\right)_{\tau\in \mathbb{R}}$ Hilbert scale.
For a definition of the fractional powers and a detailed introduction to Hilbert scales, see \cite{EnglHankeNeubauer2000}.

The work within Hilbert scales provides a versatile tool
called interpolation inequality, which is stated next. The result and its proof can be found in \cite{Baumeister1987}.
\begin{lemma}[Interpolation inequality]\label{L:InterpolationInequal}
For all $p\geq q\geq s$, $p\neq s$, and $x \in X_{p}$, we have
\begin{equation}\label{E:InterpolationInequality}
\lVert x \rVert_{q} \leq \lVert x \rVert_{p}^{1-\psi}\lVert x\rVert_{s}^{\psi},
\end{equation}
where $\psi=\frac{p-q}{p-s}$.
\end{lemma}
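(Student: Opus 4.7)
The plan is to prove the interpolation inequality via the spectral calculus of $B$. Since $B$ is selfadjoint and, by the lower bound in \eqref{E:boundedness of B inverse}, strictly positive, its spectrum is contained in $[k,\infty)$. Write $B=\int_{k}^{\infty}\lambda\,dE_{\lambda}$ for its spectral resolution. Then for every real $\tau$ and every $x\in X_{\tau}$,
\begin{equation*}
\lVert x\rVert_{\tau}^{2}=\lVert B^{\tau}x\rVert^{2}=\int_{k}^{\infty}\lambda^{2\tau}\,d\mu_{x}(\lambda),
\end{equation*}
where $\mu_{x}(\cdot):=\lVert E_{(\cdot)}x\rVert^{2}$. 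This rewriting reduces the inequality to a statement about integrals against the scalar measure $\mu_{x}$.

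Next I would dispose of the trivial boundary cases. If $p=q$ then $\psi=0$ and the claim reads $\lVert x\rVert_{p}\leq\lVert x\rVert_{p}$; if $q=s$ then $\psi=1$ and the claim reads $\lVert x\rVert_{s}\leq\lVert x\rVert_{s}$. So I may assume $0<\psi<1$. The key algebraic identity is
\begin{equation*}
2q=2p(1-\psi)+2s\psi,
\end{equation*}
which follows at once from $\psi=(p-q)/(p-s)$. Hence $\lambda^{2q}=(\lambda^{2p})^{1-\psi}(\lambda^{2s})^{\psi}$. Apply H\"older's inequality to $\mu_{x}$ with conjugate exponents $1/(1-\psi)$ and $1/\psi$:
\begin{equation*}
\int_{k}^{\infty}\lambda^{2q}\,d\mu_{x}\leq\Bigl(\int_{k}^{\infty}\lambda^{2p}\,d\mu_{x}\Bigr)^{1-\psi}\Bigl(\int_{k}^{\infty}\lambda^{2s}\,d\mu_{x}\Bigr)^{\psi}.
\end{equation*}
Re-expressing the three integrals as $\lVert x\rVert_{q}^{2}$, $\lVert x\rVert_{p}^{2}$ and $\lVert x\rVert_{s}^{2}$ respectively and taking square roots gives exactly \eqref{E:InterpolationInequality}.

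The substantive content of the argument is just the one-line application of H\"older's inequality; I do not expect an analytic obstacle there. The main technical care lies earlier, in justifying that everything is well-defined: because $x\in X_{p}$ and $p\geq q\geq s$ (together with $\lambda\geq k>0$, so large powers dominate small ones on the spectrum), the integrals $\int\lambda^{2p}d\mu_{x}$, $\int\lambda^{2q}d\mu_{x}$ and $\int\lambda^{2s}d\mu_{x}$ are all finite, hence the H\"older estimate is meaningful (even when $s$ is negative, the factor $\lambda^{2s}$ is bounded by $k^{2s}$ on the spectrum, so no divergence arises at infinity). Once this spectral-measure setup is in place, no further work is needed.
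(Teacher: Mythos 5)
Your proof is correct. The paper does not prove this lemma itself --- it simply cites Baumeister (1987) --- and the argument you give (spectral resolution of $B$, the exponent identity $q=p(1-\psi)+s\psi$, and H\"older's inequality with conjugate exponents $1/(1-\psi)$ and $1/\psi$ applied to the scalar measure $\mu_x$) is precisely the standard proof found there, including the correct handling of the degenerate cases $\psi\in\{0,1\}$ and of the finiteness of $\int\lambda^{2s}\,d\mu_x$ using $\sigma(B)\subset[k,\infty)$. The only point worth flagging is that the bound \eqref{E:boundedness of B inverse} alone locates $\sigma(B)$ in $(-\infty,-k]\cup[k,\infty)$ rather than in $[k,\infty)$; you are implicitly relying, as the paper does, on the additional (asserted) positivity of $B$, which is what licenses writing $B=\int_k^\infty\lambda\,dE_\lambda$ and defining the real powers $B^\tau$ in the first place.
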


\subsection{Assumptions}\label{SS:Asssumptions}
In this subsection, we formulate the main assumptions.
These are adopted from \cite{HofmannPlato2020}.
\begin{assumption}\label{Assumption}
\begin{itemize}
\item[$(a)$] The operator $F:\mathcal{D}(F)\to Y$ is sequentially continuous on $\mathcal{D}(F)$ with respect to the weak topologies of the Hilbert spaces $X$ and $Y$.
\item[$(b)$] The domain $\mathcal{D}(F)$ is a closed and convex subset of $X$.
\item[$(c)$] $\mathcal{D}:=\mathcal{D}(F) \cap X_{1}\neq \emptyset$  and $\overline{u}\in \mathcal{D}$.
\item[$(d)$] The solution $u^{\dagger}\in \mathcal{D}(F)$ is an interior point of $\mathcal{D}(F)$.
\item[$(e)$] The observations $f^{\delta}$ satisfy $\lVert f^{\delta}-f^{\dagger}\rVert \leq \delta$.
\item[$(f)$] For $a>0$ and constants $c_{a},C_{a}\in (0,\infty)$ with $c_{a}\leq C_{a}$, the condition 
\begin{equation}\label{E:nonlinearityCondition}
c_{a}\lVert u-u^{\dagger}\rVert_{-a}\leq \lVert Fu-f^{\dagger}\rVert \leq C_{a}\lVert u-u^{\dagger}\rVert_{-a}
\end{equation}

is satisfied for all $u\in \mathcal{D}$.
\end{itemize}
\end{assumption}
We interpret the inequality chain in (\ref{E:nonlinearityCondition}) as a smoothing property. However, sometimes it is referred to as nonlinearity condition; see e.g. \cite{HofmannMathe2017,HofmannPlato2020}.
Based on the assumptions in \ref{Assumption}, we can derive some properties of the regularized solutions $u_{\alpha}^{\delta}$.
For verification, we refer to Proposition 2.4 and Remark 2.5 in \cite{HofmannPlato2020}.
\begin{remark}
\begin{itemize}
\item With the assumptions in \ref{Assumption}, we ensure that for each $\alpha>0$, a minimizer $u_{\alpha}^{\delta}\in \mathcal{D}$ of $T_{\alpha}^{\delta}$ exists.
\item The minimizer may be not unique because the 
misfit functional $\lVert Fu-f^{\delta} \rVert^2$ 
and hence the Tikhonov functional $T_{\alpha}^{\delta}$ may be non-convex.
\item Each minimizing sequence of $T_{\alpha}^{\delta}$ over $\mathcal{D}$ has a subsequence that converges strongly in $X_{1}$ to a minimizer $u_{\alpha}^{\delta}\in\mathcal{D}$ of the Tikhonov functional.
\item For every $\alpha>0$, the regularized solutions $u_{\alpha}^{\delta}$ are stable in $X_{1}$ with respect to small
perturbations in the data $f^{\delta}$.

\end{itemize}
\end{remark}

\subsection{Source condition}\label{SS:Source condition}
We complete this section by composing a smoothness assumption on the solution $u^{\dagger}$ by virtue of a source condition. 
For this, we define the linear operator $G$ by 
\[
G:X\to X, \qquad 
G:=B^{-(2a+2)},
\]
where $a>0$ is determined by the smoothing property (\ref{E:nonlinearityCondition}). The operator $G$ is bounded, injective, selfadjoint, and positive semidefinite.
From now on, we assume that the solution $u^{\dagger}$ obeys the source condition
\begin{equation}\label{E:SourceCondition}
u^{\dagger}-\overline{u}=\varphi(G)w,
\end{equation}
for some source element $w\in X$ with $\lVert w\rVert \leq \rho$, $\rho>0$.
Here, the function $\varphi$ is defined as
\begin{equation}\label{E:Phi}
\varphi: (0,\lVert G\rVert] \to (0,\infty), \qquad 
t\mapsto(-\ln (c t))^{-\kappa},
\end{equation}
for $\kappa>0$ and with 
\begin{equation}\label{E:c}
c\in (0,\lVert G \rVert^{-1}),
\end{equation}
such that the source condition is of logarithmic type and hence of low order. 
The requirement in (\ref{E:c}) ensures that $\varphi$ does not attain singularities. Throughout this paper, the constant $c$ addresses (\ref{E:Phi}).

\section{Auxiliary elements}
In this section, we define auxiliary elements and derive some related results,
which are crucial for the proof of the main result. 
As auxiliary elements, also referred to as smooth approximations (see e.g.~\cite{HohageMiller2022}), we consider the minimizers $\hat{u}_{\alpha}$ of the artificial Tikhonov functional
\begin{equation*}
T_{a,\alpha}(u):=\Vert u- u^{\dagger}\Vert_{-a}^{2}+\alpha\lVert u-\overline{u} \rVert_{1}^{2}.
\end{equation*}
These are uniquely defined, and it can be shown that they admit the representation
\begin{equation}\label{E:Representation}
\hat{u}_{\alpha}:=\overline{u}+G(G+\alpha I )^{-1}(u^{\dagger}-\overline{u})=u^{\dagger}-\alpha(G+\alpha I)^{-1}(u^{\dagger}-\overline{u}) ,
\end{equation}
for $\alpha>0$.
This representation and the following Corollary~\ref{C:CorIndexAbsch} 
allow us to make the statements in Lemma~\ref{L:ErsteAbsch}.
Thereby, Corollary~\ref{C:CorIndexAbsch} is an adapted formulation of Corollary~5.2 in \cite{HofmannPlato2020}. The proof therein remains valid, so we refer the interested reader to the aforementioned paper. 
\begin{cor}\label{C:CorIndexAbsch}
Let $f:(0,\lVert G\rVert]\to (0,\infty)$ be a continuous, monotonically non-decreasing function with
\[
\lim_{t\downarrow 0}f(t)=0.
\] 
Moreover, suppose that for each exponent $\eta$ and sufficiently small $t>0$, the quotient function $t\mapsto t^{\eta}/f(t)$ is strictly increasing.
Then, for each $\theta \in [0,1)$, there exist constants $\overline{\alpha}>0$ and $\overline{C}>0$ such that 
\[
\underset{\lambda \in \left(0, \lVert G \rVert\right]}{\sup}\frac{\alpha \lambda^{\theta}f(\lambda)}{\lambda+\alpha}\leq \overline{C}\alpha^{\theta}f(\alpha),
 \qquad
\text{for }  \alpha \in (0,\overline{\alpha}].\]
\end{cor}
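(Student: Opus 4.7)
The plan is to split the supremum at $\lambda = \alpha$ and exploit the elementary bound
\[
\frac{\alpha}{\lambda+\alpha} \leq \min\{1,\, \alpha/\lambda\},
\]
which turns the kernel into a pointwise comparison that is easy to control on each side via the monotonicity assumptions on $f$. This produces three regimes: $\lambda \in (0,\alpha]$, $\lambda \in (\alpha,t_{0}]$ where $t_{0}$ is a threshold produced by the hypothesis applied to the exponent $\eta := 1-\theta > 0$, and the residual band $\lambda \in (t_{0},\lVert G\rVert]$.

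On the first regime I apply $\alpha/(\lambda+\alpha) \leq 1$, reducing the integrand to $\lambda^{\theta}f(\lambda)$. Since $\theta \geq 0$ and $f$ is non-decreasing, both $\lambda^{\theta}$ and $f(\lambda)$ are non-decreasing in $\lambda$, so this is bounded by $\alpha^{\theta}f(\alpha)$ with constant $1$. On the second regime I apply $\alpha/(\lambda+\alpha) \leq \alpha/\lambda$ to obtain $\alpha \lambda^{\theta-1} f(\lambda) = \alpha\cdot f(\lambda)/\lambda^{1-\theta}$, and I invoke the hypothesis with $\eta = 1-\theta$, which yields a threshold $t_{0}\in (0,\lVert G\rVert]$ on which $t\mapsto t^{1-\theta}/f(t)$ is strictly increasing, equivalently $t\mapsto f(t)/t^{1-\theta}$ is strictly decreasing. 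After fixing $\overline{\alpha}\leq t_{0}$, for $\alpha<\lambda\leq t_{0}$ this yields $f(\lambda)/\lambda^{1-\theta} \leq f(\alpha)/\alpha^{1-\theta}$, so the expression is again $\leq \alpha^{\theta}f(\alpha)$.

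The main obstacle is the residual band $\lambda\in(t_{0},\lVert G\rVert]$, which the structural monotonicity does not reach. Here I use that $\lambda\mapsto \lambda^{\theta-1}f(\lambda)$ is continuous on $[t_{0},\lVert G\rVert]$ with no singularity (since $\theta<1$), hence bounded by some $M>0$, giving $\alpha\lambda^{\theta-1}f(\lambda)\leq \alpha M$. To match the desired right-hand side I write
\[
\alpha M = M\, \alpha^{\theta}f(\alpha)\cdot \frac{\alpha^{1-\theta}}{f(\alpha)},
\]
and observe that $\alpha\mapsto \alpha^{1-\theta}/f(\alpha)$ is strictly increasing on $(0,t_{0}]$, hence bounded on $(0,\overline{\alpha}]$ by $\overline{\alpha}^{1-\theta}/f(\overline{\alpha})$. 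The smallness of $\alpha$ (rather than any monotone comparison with $\lambda$) is what absorbs this contribution. Setting $\overline{C}$ to be the maximum of the constants arising in the three regimes concludes the argument; the cleanest presentation fixes $\overline{\alpha}$ first, small enough to lie within the monotonicity window for $\eta = 1-\theta$, and defines $\overline{C}$ at the end.
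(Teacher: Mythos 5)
Your argument is correct: splitting the supremum at $\lambda=\alpha$, using $\alpha/(\lambda+\alpha)\leq\min\{1,\alpha/\lambda\}$ together with the monotonicity of $f$ on $(0,\alpha]$ and of $t\mapsto t^{1-\theta}/f(t)$ (with $\eta=1-\theta>0$) on $(\alpha,t_{0}]$, and then absorbing the residual band $(t_{0},\|G\|]$ via compactness and the bound $\alpha^{1-\theta}/f(\alpha)\leq\overline{\alpha}^{1-\theta}/f(\overline{\alpha})$, yields the claim with $\overline{C}=\max\{1,\,M\overline{\alpha}^{1-\theta}/f(\overline{\alpha})\}$. The paper itself gives no proof here --- it defers to Corollary 5.2 of the cited Hofmann--Plato paper --- but your decomposition is exactly the standard argument for such index-function estimates, so this matches in substance.
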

\begin{remark}
The function $\varphi$, defined as in (\ref{E:Phi}), fulfils the conditions of Corollary \ref{C:CorIndexAbsch}:

Obviously, $\varphi$ is a continuous, monotonically non-decreasing function and satisfies $
\lim_{t\downarrow 0}\varphi(t)=0$.
  Considering $t\in(0,c^{-1}\rme^{-\kappa/\eta})$, the last required property can be observed as a result of
\begin{align*}
\frac{\rmd}{\rmd t}\left(\frac{t^{\eta}}{(-\ln (ct))^{-\kappa}}\right)
&=
\eta t^{\eta -1}(-\ln (ct))^{\kappa}-t^{\eta}\kappa(-\ln(c t))^{\kappa -1}\frac{1}{t}\\
&=
t^{\eta-1}(-\ln (ct))^{\kappa -1}\left[\eta (-\ln(c t))-\kappa\right]
> 0.
\end{align*}
Thus, for
$t< \Vert G \Vert \rme^{-\kappa/\eta}$,
the quotient function $t\mapsto t ^{\eta}/\varphi(t)$ is strictly increasing. 
\end{remark}
Note that in the following, 
we use the abbreviation 
\[
r:=\frac{2a+2}{a}.
\]
The next result in Lemma \ref{L:ErsteAbsch} is used to establish the subsequent lemmas. It provides bounds for terms involving the auxiliary elements.  
\begin{lemma}\label{L:ErsteAbsch}
There exist positive constants $C_{i}$, $i=1,2,3$, and $\alpha_{0}\leq \lVert G\rVert$ such that the following inequalities hold for $\alpha \in (0,\alpha_{0}]$: 
\begin{enumerate}[label=(\roman*)]
\item \label{En:itemiLemma33}
$
\lVert \hat{u}_{\alpha}-u^{\dagger} \rVert \leq C_{1}\varphi(\alpha),
$
\item \label{En:itemiiLemma33}
$
\lVert \hat{u}_{\alpha}-u^{\dagger} \rVert_{-a} \leq C_{2}\alpha^{\frac{1}{r}}\varphi(\alpha),
$
\item \label{En:itemiiiLemma33}
$\lVert \hat{u}_{\alpha}-\overline{u} \rVert_{1} \leq  C_{3}\alpha^{-\frac{1}{ra}}\varphi(\alpha).
$
\end{enumerate}
\end{lemma}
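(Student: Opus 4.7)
My plan is to reduce each of the three inequalities to a spectral supremum of the form appearing in Corollary~\ref{C:CorIndexAbsch}, using the representation (\ref{E:Representation}) together with the source condition (\ref{E:SourceCondition}). The key algebraic input is to recognise $B$ as a fractional power of $G$: since $G=B^{-(2a+2)}$ and $ra=2a+2$, one has
\[
B^{-a}=G^{1/r},\qquad B=G^{-1/(ra)},
\]
so that the norms $\lVert\cdot\rVert_{-a}$ and $\lVert\cdot\rVert_{1}$ become $\lVert G^{1/r}\,\cdot\rVert$ and $\lVert G^{-1/(ra)}\,\cdot\rVert$, respectively. Substituting the source condition into (\ref{E:Representation}) yields the two working identities
\[
\hat u_\alpha-u^{\dagger}=-\alpha(G+\alpha I)^{-1}\varphi(G)w,\qquad
\hat u_\alpha-\overline u=G(G+\alpha I)^{-1}\varphi(G)w.
\]

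For \ref{En:itemiLemma33}, I would directly apply the spectral theorem to the first identity to bound the left-hand side by $\lVert w\rVert\sup_{\lambda\in(0,\lVert G\rVert]}\alpha\varphi(\lambda)/(\lambda+\alpha)$, and then invoke Corollary~\ref{C:CorIndexAbsch} with $f=\varphi$ and $\theta=0$ (the remark after the corollary verifies that $\varphi$ qualifies) to obtain the bound $\overline C\,\varphi(\alpha)$. For \ref{En:itemiiLemma33}, applying $G^{1/r}$ to the first identity gives the spectral supremum $\sup_\lambda \alpha\lambda^{1/r}\varphi(\lambda)/(\lambda+\alpha)$, which again fits Corollary~\ref{C:CorIndexAbsch} verbatim with $\theta=1/r\in(0,1)$ and produces the $\alpha^{1/r}\varphi(\alpha)$ factor.

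Item \ref{En:itemiiiLemma33} is the only one requiring a small manipulation, because applying $G^{-1/(ra)}$ to the second identity yields
\[
\sup_{\lambda\in(0,\lVert G\rVert]}\frac{\lambda^{\,1-1/(ra)}\varphi(\lambda)}{\lambda+\alpha},
\]
which has no extra factor of $\alpha$ in the numerator. I would simply write the integrand as $\alpha^{-1}$ times $\alpha\lambda^{\theta}\varphi(\lambda)/(\lambda+\alpha)$ with $\theta:=1-1/(ra)\in(0,1)$; Corollary~\ref{C:CorIndexAbsch} then bounds the supremum by $\overline C\,\alpha^{\theta-1}\varphi(\alpha)=\overline C\,\alpha^{-1/(ra)}\varphi(\alpha)$, as required.

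I do not foresee a real obstacle; the statement is essentially an unpacking of Corollary~\ref{C:CorIndexAbsch} through the spectral calculus for $G$. The one point that must be executed carefully is the book-keeping of exponents — matching $B^{-a}=G^{1/r}$ and $B=G^{-1/(ra)}$ so that the powers of $\alpha$ in the three right-hand sides come out correctly — and the choice of $\alpha_0$, which I would take as the minimum of $\lVert G\rVert$ and the three thresholds $\overline\alpha$ produced by Corollary~\ref{C:CorIndexAbsch} for the exponents $\theta\in\{0,\,1/r,\,1-1/(ra)\}$. The constants $C_1,C_2,C_3$ then absorb $\rho$ and the corresponding $\overline C$'s.
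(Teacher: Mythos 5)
Your proposal is correct and follows essentially the same route as the paper: substitute the source condition into the representation \eqref{E:Representation}, rewrite $\lVert\cdot\rVert_{-a}$ and $\lVert\cdot\rVert_{1}$ as $\lVert G^{1/r}\cdot\rVert$ and $\lVert G^{-1/(ra)}\cdot\rVert$, and apply Corollary~\ref{C:CorIndexAbsch} with $\theta=0$, $\theta=1/r$, and $\theta=1-1/(ra)$ respectively, including the same $\alpha^{-1}$ factoring trick for item~\ref{En:itemiiiLemma33}. The exponent book-keeping and the choice of $\alpha_{0}$ and the constants all match the paper's argument.
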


\begin{proof}
To deduce the items of the lemma, we use representation (\ref{E:Representation}), source condition~(\ref{E:SourceCondition}), and Corollary \ref{C:CorIndexAbsch}.
We first examine item $\ref{En:itemiLemma33}$. According to Corollary~\ref{C:CorIndexAbsch}, there exist positive constants $\overline{C}_{1}$ and $\overline{\alpha}_{1}$ such that
\begin{align*}
\lVert \hat{u}_{\alpha}-u^{\dagger} \rVert
&=
\lVert -\alpha(G+\alpha I)^{-1}(u^{\dagger}-\overline{u}) \rVert
=
\lVert \alpha(G+\alpha I)^{-1}\varphi(G)w \rVert
\\
& \leq
\overline{C}_{1}\rho\varphi(\alpha)=C_{1}\varphi(\alpha), \qquad \alpha \in (0,\overline{\alpha}_{1}].
\end{align*}
In the same manner, we obtain for item $\ref{En:itemiiLemma33}$
\begin{align*}
\lVert \hat{u}_{\alpha}-u^{\dagger} \rVert_{-a}
&=
\lVert B^{-a}\left(\hat{u}_{\alpha}-u^{\dagger}\right)\rVert
=
\lVert G^{\frac{a}{2a+2}}\left(\hat{u}_{\alpha}-u^{\dagger}\right)\rVert \\
&=
\Vert \alpha G^{\frac{1}{r}}(G+\alpha I)^{-1}\varphi(G)w \Vert
\leq C_{2}\alpha^{\frac{1}{r}}\varphi(\alpha), \qquad \alpha \in (0,\overline{\alpha}_{2}]
\end{align*}
and for item $\ref{En:itemiiiLemma33}$
\begin{align*}
\lVert \hat{u}_{\alpha}-\overline{u} \rVert_{1} 
&= \lVert B\left(\hat{u}_{\alpha}-\overline{u}\right) \rVert
=\lVert \frac{1}{\alpha}\alpha G^{-\frac{1}{2a+2}}G(G+\alpha I )^{-1}(u^{\dagger}-\overline{u}) \rVert \\
&\leq C_{3}\alpha^{-\frac{1}{ra}}\varphi(\alpha), \qquad \alpha \in (0,\overline{\alpha}_{3}],
\end{align*}
with $\overline{\alpha}_{2},\overline{\alpha}_{3}>0$ and $C_{2}=\overline{C}_{2}\rho,C_{3}=\overline{C}_{3}\rho>0$, chosen accordingly. 
Setting $\alpha_{0}=\min \left\{ \overline{\alpha}_{1},\overline{\alpha}_{2},\overline{\alpha}_{3}\right\}$ yields the assertion.
\end{proof}
With Lemma~\ref{L:ErsteAbsch}, we are ready to establish the next result in Lemma~\ref{L:ZweiteAbsch}, which is used at several points in this paper.
\begin{lemma}\label{L:ZweiteAbsch}
Under Assumption \ref{Assumption}, the following inequality is satisfied:
\[
\max\left\{ \lVert Fu_{\alpha}^{\delta}-f^{\delta}\rVert, \sqrt{\alpha}\lVert u_{\alpha}^{\delta}-\overline{u}\rVert_{1} \right\} \leq C_{4} \alpha^{\frac{1}{r}}\varphi(\alpha)+\delta,
\]
for a positive constant $C_{4}$ and $\alpha\leq \alpha_{4}$, with $\alpha_{4}$ sufficiently small.
\end{lemma}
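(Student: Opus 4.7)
The plan is to exploit the minimizing property of $u_{\alpha}^{\delta}$ by comparing $T_{\alpha}^{\delta}(u_{\alpha}^{\delta})$ with $T_{\alpha}^{\delta}(\hat{u}_{\alpha})$ instead of with $T_{\alpha}^{\delta}(u^{\dagger})$, the latter being infinite in the oversmoothing setting. To justify this comparison I first need $\hat{u}_{\alpha}\in \mathcal{D}$ for small $\alpha$. Membership in $X_{1}=\mathcal{D}(B)$ is immediate from representation (\ref{E:Representation}) together with the fact that $G=B^{-(2a+2)}$ maps $X$ into $\mathcal{D}(B)$, while membership in $\mathcal{D}(F)$ follows from Assumption~\ref{Assumption}(d) and Lemma~\ref{L:ErsteAbsch}(i): the latter gives $\lVert \hat{u}_{\alpha}-u^{\dagger}\rVert \le C_{1}\varphi(\alpha)\to 0$, so $\hat{u}_{\alpha}$ sits in an interior neighbourhood of $u^{\dagger}$ once $\alpha$ is small enough.

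Under this condition, $T_{\alpha}^{\delta}(u_{\alpha}^{\delta})\le T_{\alpha}^{\delta}(\hat{u}_{\alpha})$ is available, and I would then estimate the two summands on the right-hand side separately. For the misfit, the triangle inequality, the upper bound in (\ref{E:nonlinearityCondition}) applied at $\hat{u}_{\alpha}\in\mathcal{D}$, and Lemma~\ref{L:ErsteAbsch}(ii) give
\[
\lVert F\hat{u}_{\alpha}-f^{\delta}\rVert \le \lVert F\hat{u}_{\alpha}-f^{\dagger}\rVert+\delta \le C_{a}\lVert \hat{u}_{\alpha}-u^{\dagger}\rVert_{-a}+\delta \le C_{a}C_{2}\,\alpha^{\frac{1}{r}}\varphi(\alpha)+\delta.
\]
For the penalty, Lemma~\ref{L:ErsteAbsch}(iii) combined with the elementary identity $\tfrac{1}{2}-\tfrac{1}{ra}=\tfrac{1}{r}$ (which rests on $ra=2a+2$) yields
\[
\sqrt{\alpha}\,\lVert \hat{u}_{\alpha}-\overline{u}\rVert_{1}\le C_{3}\,\alpha^{\frac{1}{r}}\varphi(\alpha).
\]

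Since both $\lVert Fu_{\alpha}^{\delta}-f^{\delta}\rVert^{2}$ and $\alpha\lVert u_{\alpha}^{\delta}-\overline{u}\rVert_{1}^{2}$ are individually dominated by $T_{\alpha}^{\delta}(\hat{u}_{\alpha})$, the elementary inequality $\sqrt{x^{2}+y^{2}}\le x+y$ for nonnegative $x,y$ converts the sum bound into a bound on each term, producing
\[
\max\bigl\{\lVert Fu_{\alpha}^{\delta}-f^{\delta}\rVert,\;\sqrt{\alpha}\,\lVert u_{\alpha}^{\delta}-\overline{u}\rVert_{1}\bigr\}\le (C_{a}C_{2}+C_{3})\,\alpha^{\frac{1}{r}}\varphi(\alpha)+\delta,
\]
which is the claim with $C_{4}:=C_{a}C_{2}+C_{3}$ and $\alpha_{4}$ chosen as the minimum of $\alpha_{0}$ from Lemma~\ref{L:ErsteAbsch} and the threshold required for $\hat{u}_{\alpha}\in\mathcal{D}(F)$. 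I expect the only real subtlety to lie in the verification $\hat{u}_{\alpha}\in\mathcal{D}$; once that is in place, the remainder is exponent bookkeeping, with the identity $\tfrac{1}{2}-\tfrac{1}{ra}=\tfrac{1}{r}$ being the single nontrivial algebraic step that makes the misfit and penalty contributions decay at precisely the same rate $\alpha^{\frac{1}{r}}\varphi(\alpha)$, so that no mismatched power of $\alpha$ appears on the right-hand side.
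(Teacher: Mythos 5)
Your proposal is correct and follows essentially the same route as the paper's proof: establish $\hat{u}_{\alpha}\in\mathcal{D}$ for small $\alpha$ via Lemma~\ref{L:ErsteAbsch}\ref{En:itemiLemma33} and the interior-point assumption, compare $T_{\alpha}^{\delta}(u_{\alpha}^{\delta})\le T_{\alpha}^{\delta}(\hat{u}_{\alpha})$, bound the misfit with (\ref{E:nonlinearityCondition}) and Lemma~\ref{L:ErsteAbsch}\ref{En:itemiiLemma33}, the penalty with Lemma~\ref{L:ErsteAbsch}\ref{En:itemiiiLemma33} and the exponent identity $\tfrac{1}{2}-\tfrac{1}{ra}=\tfrac{1}{r}$, and pass from the sum of squares to the maximum via $\sqrt{x^{2}+y^{2}}\le x+y$, arriving at the same constant $C_{4}=C_{a}C_{2}+C_{3}$.
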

For a proof, we follow along the lines of the proof of Lemma 3.2 in \cite{HofmannPlato2020}.
\begin{proof}
Let $\alpha_{0}$ be the constant from Lemma \ref{L:ErsteAbsch}. From item $\ref{En:itemiLemma33}$ of this lemma, it follows that $\hat{u}_{\alpha}\in \mathcal{D}$ for $\alpha>0$ sufficiently small, say $\alpha\leq \alpha_{4}$ for some $\alpha_{4}\leq \alpha_{0}$,
because $u^{\dagger}$ is assumed to be an interior point of $\mathcal{D}(F)$. Based on this information, we can compute
\begin{align*}
 \left(\lVert F u_{\alpha}^{\delta}-f^{\delta}\rVert^{2}+\alpha\lVert u_{\alpha}^{\delta}-\overline{u}\rVert_{1}^{2}\right)^{\frac{1}{2}}
&\leq 
\left(\lVert F \hat{u}_{\alpha}-f^{\delta}\rVert^{2}+\alpha\lVert \hat{u}_{\alpha}-\overline{u}\rVert_{1}^{2}\right)^{\frac{1}{2}} \\
 &\leq
\lVert F\hat{u}_{\alpha}-f^{\delta}\rVert+\sqrt{\alpha}\lVert \hat{u}_{\alpha}-\overline{u}\rVert_{1} \\
&\leq
\lVert F\hat{u}_{\alpha}-f^{\dagger}\rVert+\sqrt{\alpha}\lVert \hat{u}_{\alpha}-\overline{u}\rVert_{1}+\delta, \ \alpha\leq\alpha_{4}.
\end{align*}
The right-hand side of (\ref{E:nonlinearityCondition}) and item $\ref{En:itemiiLemma33}$ of Lemma \ref{L:ErsteAbsch} allow us to estimate the first summand:
\[
\lVert F\hat{u}_{\alpha}-f^{\dagger}\rVert \leq C_{a}\lVert \hat{u}_{\alpha}-u^{\dagger}\rVert_{-a} \leq C_{a}C_{2}\alpha^{\frac{1}{r}}\varphi(\alpha), \qquad \alpha\leq\alpha_{4}.
\]
Using item $\ref{En:itemiiiLemma33}$ of Lemma \ref{L:ErsteAbsch}, we find for the second summand 
\[
\sqrt{\alpha}\lVert \hat{u}_{\alpha}-\overline{u}\rVert_{1}\leq C_{3}\sqrt{\alpha}\alpha^{-\frac{1}{ra}}\varphi(\alpha)=C_{3}\alpha^{\frac{1}{2}-\frac{1}{2a+2}}\varphi(\alpha), \qquad \alpha\leq\alpha_{4}.
\]
With $C_{4}=C_{a}C_{2}+C_{3}$ the claim holds.
\end{proof}
In Lemma \ref{L:ABC}, we introduce a parameter choice that allows us to determine upper bounds for the items in Lemma \ref{L:ErsteAbsch} by functions of $\delta$. 
\begin{lemma}\label{L:ABC}
For the a priori parameter choice 
\[
\beta=\beta(\delta):=\delta^{r}\left(-\ln(c\delta)\right)^{\kappa r},
\]
we consider the auxiliary elements $\hat{u}_{\beta}$, defined as in (\ref{E:Representation}), with $\alpha=\beta$. 
Then, the following bounds hold as $\delta \downarrow 0$:
\begin{enumerate}[label=(\roman*)]
\item \label{En:itemiLemmaABC}
$
\lVert \hat{u}_{\beta}-u^{\dagger} \rVert= \Or\left(\varphi(\delta)\right),
$
\item \label{En:itemiiLemmaABC}
$
\lVert \hat{u}_{\beta}-u^{\dagger} \rVert_{-a}  = \Or(\delta), 
$
\item \label{En:itemiiiLemmaABC}
$
\lVert \hat{u}_{\beta}-\overline{u} \rVert_{1} =\Or\left(\delta^{-\frac{1}{a}}\varphi(\delta)^{\frac{r}{2}}\right).
$
\end{enumerate}
\end{lemma}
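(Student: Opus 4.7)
The plan is to apply Lemma~\ref{L:ErsteAbsch} with $\alpha=\beta(\delta)$ and convert the resulting bounds in $\beta$ into bounds in $\delta$. The only nontrivial ingredient is a logarithmic asymptotic for $\varphi(\beta(\delta))$; once that is in hand, each of (i)--(iii) follows by a short algebraic manipulation using $\beta=\delta^r(-\ln(c\delta))^{\kappa r}$ and $r=(2a+2)/a$.

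\textbf{Step 1: $\beta(\delta)\to 0$ and admissibility.}
Since $\varphi(\delta)=(-\ln(c\delta))^{-\kappa}$, we have $\beta(\delta)=\delta^r/\varphi(\delta)^{r}$, and because $r>0$, $\kappa>0$, I verify $\beta(\delta)\to 0$ as $\delta\downarrow 0$. Hence for $\delta$ sufficiently small we have $\beta(\delta)\le \alpha_0$, so the three estimates of Lemma~\ref{L:ErsteAbsch} apply with $\alpha=\beta$.

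\textbf{Step 2: asymptotic $\varphi(\beta(\delta))=\mathrm{O}(\varphi(\delta))$.}
The main technical step is to show that
\[
-\ln(c\beta(\delta)) = r(-\ln(c\delta)) + (r-1)\ln c - \kappa r\ln(-\ln(c\delta)),
\]
obtained by taking the logarithm of $c\beta=c\delta^{r}(-\ln(c\delta))^{\kappa r}$. Since $-\ln(c\delta)\to\infty$ as $\delta\downarrow 0$, the first summand dominates, so $-\ln(c\beta(\delta)) \sim r(-\ln(c\delta))$. Consequently
\[
\varphi(\beta(\delta)) = \bigl(-\ln(c\beta(\delta))\bigr)^{-\kappa}
\sim r^{-\kappa}\varphi(\delta),
\]
which gives $\varphi(\beta(\delta))=\mathrm{O}(\varphi(\delta))$. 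This is the step I expect to write out most carefully; everything else is bookkeeping.

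\textbf{Step 3: derive (i)--(iii).}
Item (i) is immediate from Lemma~\ref{L:ErsteAbsch}\ref{En:itemiLemma33} and Step~2. For item (ii), I note $\beta^{1/r}=\delta(-\ln(c\delta))^{\kappa}=\delta/\varphi(\delta)$, so that Lemma~\ref{L:ErsteAbsch}\ref{En:itemiiLemma33} yields
\[
\lVert\hat u_{\beta}-u^{\dagger}\rVert_{-a}
\le C_{2}\,\beta^{1/r}\varphi(\beta)
= C_{2}\,\delta\,\frac{\varphi(\beta(\delta))}{\varphi(\delta)} = \mathrm{O}(\delta).
\]
For item (iii), I compute $\beta^{-1/(ra)}=\delta^{-1/a}(-\ln(c\delta))^{-\kappa/a}=\delta^{-1/a}\varphi(\delta)^{1/a}$, so by Lemma~\ref{L:ErsteAbsch}\ref{En:itemiiiLemma33},
\[
\lVert\hat u_{\beta}-\overline{u}\rVert_{1}
\le C_{3}\,\beta^{-1/(ra)}\varphi(\beta)
= C_{3}\,\delta^{-1/a}\varphi(\delta)^{1/a}\varphi(\beta(\delta))
= \mathrm{O}\!\left(\delta^{-1/a}\varphi(\delta)^{1+1/a}\right),
\]
and the identity $1+1/a=(a+1)/a=r/2$ (which follows directly from $r=(2a+2)/a$) converts the exponent of $\varphi(\delta)$ into $r/2$, yielding the asserted bound.
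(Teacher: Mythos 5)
Your proposal is correct and follows essentially the same route as the paper: apply Lemma~\ref{L:ErsteAbsch} with $\alpha=\beta(\delta)$ and show that each resulting bound is controlled by the corresponding function of $\delta$, the key point being $\varphi(\beta(\delta))/\varphi(\delta)\to r^{-\kappa}$. The only (cosmetic) difference is that you factor $\beta^{1/r}=\delta/\varphi(\delta)$ and $\beta^{-1/(ra)}=\delta^{-1/a}\varphi(\delta)^{1/a}$ up front so that all three items reduce to that single limit, whereas the paper computes the three quotients separately and observes that they all coincide with the same expression.
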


\begin{proof}
According to Lemma \ref{L:ErsteAbsch}, we have
\begin{equation*}
 \lVert \hat{u}_{\beta}-u^{\dagger} \rVert \leq C_{1}\varphi(\beta), 
\
\lVert \hat{u}_{\beta}-u^{\dagger} \rVert_{-a} \leq C_{2}\beta^{\frac{1}{r}}\varphi(\beta), 
\text{ and }
\lVert \hat{u}_{\beta}-\overline{u} \rVert_{1} \leq  C_{3}\beta^{-\frac{1}{ra}}\varphi(\beta),
\end{equation*}
for some constants $C_{i}$, $i=1,2,3$, and with $\beta$ small enough. 
Consequently, it is sufficient to show the estimates
\begin{enumerate}[label=$(\Alph*)$]
\item \label{EN:itemAProof}
$
\varphi(\beta)= \Or\left(\varphi(\delta)\right),
$
\item \label{EN:itemBProof}
$
\beta^{\frac{1}{r}}\varphi(\beta) = \Or(\delta), \text{ and }
$
\item \label{EN:itemCProof}
$
\beta^{-\frac{1}{ra}}\varphi(\beta) =\Or\left(\delta^{-\frac{1}{a}}\varphi(\delta)^{\frac{r}{2}}\right)
$
\end{enumerate}
as $\delta\downarrow 0$.
First, we look at item \ref{EN:itemAProof}:
\begin{align}
\dfrac{\varphi(\beta)}{\varphi(\delta)}
&=
\dfrac{\left[-\ln\left(c\delta^{r}\left(-\ln(c\delta)\right)^{\kappa r}\right)\right]^{-\kappa}
}{(-\ln(c\delta))^{-\kappa}} \nonumber\\
&=
\left[\dfrac{r\ln(c\delta)+\ln(c^{1-r})+\kappa r (\ln(-\ln(c\delta)))}{\ln(c\delta)}\right]^{-\kappa} \nonumber \\ 
 &=
\left[ r \left(1+ \dfrac{r^{-1}\ln(c^{1-r})+\kappa \ln(-\ln(c\delta))}{\ln(c\delta)}\right)\right]^{-\kappa}. \label{E:Klammer}
\end{align}
Consequently, 
\begin{equation*}
\lim_{\delta\downarrow 0} \frac{\varphi(\beta) }{ \varphi(\delta)} =r^{-\kappa}.
\end{equation*}
Next, consider item \ref{EN:itemBProof}:
\begin{align*}
\frac{\beta^{\frac{1}{r}}\varphi(\beta)}{\delta} 
&=
\dfrac{\delta
(-\ln(c\delta))^{\kappa}\left[ -\ln \left( c \delta^{r}(-\ln(c\delta))^{\kappa r}\right)\right]^{-\kappa}}{\delta}\\
&=
\left[ r \left( 1+\dfrac{r^{-1}\ln(c^{1-r})+\kappa \ln(-\ln(c\delta))}{\ln(c\delta)}\right)\right]^{-\kappa}.
\end{align*}
This term coincides with (\ref{E:Klammer}). Hence we have $\lim_{\delta\downarrow0}\frac{\beta^{\frac{1}{r}}\varphi(\beta)}{\delta}=r^{-\kappa}$.
Finally, we examine item \ref{EN:itemCProof}:
\begin{align*}
 \dfrac{\beta^{-\frac{1}{ra}}\varphi(\beta)}{\delta^{-\frac{1}{a}}\varphi(\delta)^{\frac{r}{2}}}  
&=
\dfrac{\delta^{-\frac{1}{a}}(-\ln(c\delta))^{-\frac{\kappa}{a}}\left[-\ln\left(c\delta^{r}(-\ln(c\delta))^{\kappa r}\right)\right]^{-\kappa}}{\delta^{-\frac{1}{a}}\left(-\ln(c\delta)\right)^{-\kappa\frac{r}{2}}} \\
 &=
\left[\dfrac{-\ln\left(c\delta^{r}(-\ln(c\delta))^{\kappa r}\right)}{\left(-\ln(c\delta)\right)^{-\frac{1}{a}+\frac{r}{2}}}\right]^{-\kappa}
\\
 &=
\left[ r \left( 1+\dfrac{r^{-1}\ln(c^{1-r})+\kappa \ln(-\ln(c\delta))}{\ln(c\delta)}\right)\right]^{-\kappa}.
\end{align*}
Again, the same term as in (\ref{E:Klammer}) results such that the whole expression converges to $r^{-\kappa}$ as $\delta \downarrow 0$.
\end{proof}

\section{Discrepancy principle and associated results}\label{S:DiscrepancyPrinciple}
In the present context, the following version of the discrepancy principle is used to select the regularization parameter $\alpha$.
\begin{defn}(Discrepancy principle)\label{D:Discrepancy}
For constants $k,l \in (1,\infty)$ proceed as follows:
\begin{itemize}
\item[$(a)$] If $\lVert F\overline{u}-f^{\delta}\rVert \leq k \delta$ holds, choose $u_{\alpha_{\ast}}^{\delta}:=\overline{u}\in \mathcal{D}$, which corresponds to $\alpha_{\ast}=\infty$.

\item[$(b)$] If $\lVert F\overline{u}-f^{\delta}\rVert > k \delta$, determine $\alpha=:\alpha_{\ast}\in(0,\infty)$ such that
\begin{equation} \label{E:DP}
\lVert Fu_{\alpha_{\ast}}^{\delta}-f^{\delta}\rVert \leq k\delta \leq \lVert Fu_{\gamma_{\ast}}^{\delta}-f^{\delta}\rVert,
\end{equation}
for some $ \gamma_{\ast}\in [\alpha_{\ast}, l\alpha_{\ast}]$.
\end{itemize}
\end{defn}
\begin{remark}
\begin{enumerate}
\item The choice $u_{\infty}^{\delta}=\overline{u}$ in item $(a)$ of Definition \ref{D:Discrepancy} is coherent because we have $\lim_{\alpha\rightarrow\infty}\lVert u_{\alpha}^{\delta}-\overline u\rVert=0$. This can be seen through the following computation:
\[
\lVert Fu_{\alpha}^{\delta}-f^{\delta}\rVert^{2}+\alpha\lVert u_{\alpha}^{\delta}-\overline u\rVert_{1}^{2}
=T_{\alpha}^{\delta}(u_{\alpha}^{\delta})
\leq
T_{\alpha}^{\delta}(\overline{u})
=
\lVert F \overline{u}-f^{\delta}\rVert^{2}.
\]
It follows $\lVert u_{\alpha}^{\delta}-\overline u\rVert_{1}=\Or
(\alpha^{-\frac{1}{2}})$ 
as $\alpha \rightarrow \infty$ and consequently, due to condition (\ref{E:boundedness of B inverse}) of the operator $B$,
that $\lVert u_{\alpha}^{\delta}-\overline u\rVert=\Or
(\alpha^{-\frac{1}{2}})$ 
as $\alpha\rightarrow\infty$.
\item The discrepancy principle in Definition \ref{D:Discrepancy} allows discontinuities in the functional $\lVert Fu_{\alpha}^{\delta}-f^{\delta}\rVert$ and is thus more flexible than the classical version of the discrepancy principle, i.e.~determining a parameter $\alpha$ such that $\lVert Fu_{\alpha}^{\delta}-f^{\delta}\rVert\approx k\delta$.
Moreover, Definition \ref{D:Discrepancy} allows a sequential implementation as we see in Section \ref{S:NumericalResults}.
\item Under Assumption \ref{Assumption}, the function $\alpha\mapsto\lVert Fu_{\alpha}^{\delta}-f^{\delta}\rVert$ is monotonically non-decreasing for fixed values of $\delta>0$. 
For a proof, see \cite{HofmannPlato2020}. This fact assures that a parameter $\alpha_{\ast}$ can be found such that it determines an approximation $u_{\alpha_{\ast}}^{\delta}\in\mathcal{D}$ for each fixed noise level $\delta$. Thus, the discrepancy principle is appropriate. 
\end{enumerate}
\end{remark}
The next result enables us to derive the lower bound in Lemma \ref{L:LowerEstAlphaAst} for $\alpha_{\ast}$ chosen according to the parameter choice strategy specified in Definition \ref{D:Discrepancy}.
\begin{lemma}\label{L:Tautenhahn}
For constants $b,d >0$, the inverse of the function 
\begin{equation}\label{E:chi}
\chi_{b,d}:(0,\lVert G\rVert]\to \mathbb{R}, \qquad t\mapsto t^{\frac{1}{b}}\left(-\ln (ct)\right)^{-d}
\end{equation} 
satisfies
\begin{equation*}
\chi^{-1}_{b,d}(t)\geq C_{5}t^{b}\left(-\ln (ct)\right)^{bd}, \qquad t \in (0,\lVert G\rVert]
\end{equation*}
for some constant $C_{5}>0$.
\end{lemma}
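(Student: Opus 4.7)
The plan is to reparametrize the problem by setting $u := \chi^{-1}_{b,d}(t)$, so that $t = u^{1/b}(-\ln(cu))^{-d}$ with $u \in (0,\|G\|]$. Raising $t$ to the $b$-th power gives $t^{b} = u(-\ln(cu))^{-bd}$, so the factor of $u$ cancels when the claim is recast in the $u$-variable, reducing the inequality to the purely logarithmic statement
\[
(-\ln(cu))^{bd} \;\geq\; C_{5}\,(-\ln(ct))^{bd},
\]
or equivalently $-\ln(cu) \geq C_{5}^{1/(bd)}(-\ln(ct))$. This removes the exponent $1/b$ entirely and isolates the logarithmic factor that one must control.

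Next I would take logarithms of $ct = c\,u^{1/b}(-\ln(cu))^{-d}$ and introduce the abbreviation $L := -\ln(cu)$, which is positive and tends to $+\infty$ as $u\downarrow 0$ since $cu<1$. A direct calculation gives
\[
-\ln(ct) \;=\; \frac{L}{b} + d\ln L - \bigl(1-\tfrac{1}{b}\bigr)\ln c,
\]
so that as $u\downarrow 0$ the leading term is $L/b$, yielding $L/(-\ln(ct))\to b$. Consequently $-\ln(cu)\geq b(1-\varepsilon)(-\ln(ct))$ for any $\varepsilon>0$ and all sufficiently small $u$, which produces the desired bound near the origin with $C_{5}$ chosen slightly below $b^{bd}$. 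This asymptotic step is analogous in spirit to the expansions carried out in the proof of Lemma \ref{L:ABC}.

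Finally, to extend the inequality to the full interval $(0,\|G\|]$, I would argue by continuity: $\chi_{b,d}$ is continuous and strictly increasing (its derivative is readily seen to be strictly positive on $(0,\|G\|]$ because $c<\|G\|^{-1}$ guarantees $-\ln(ct)>0$), hence $\chi^{-1}_{b,d}$ is continuous on its image, and the ratio
\[
\frac{\chi^{-1}_{b,d}(t)}{t^{b}(-\ln(ct))^{bd}}
\]
is a continuous, strictly positive function whose limit at $0$ exists and is positive by the previous step. It is therefore bounded below by a positive constant on any subinterval bounded away from $0$, and combining this with the near-zero asymptotic bound produces a single constant $C_{5}>0$ valid throughout $(0,\|G\|]$. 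The main obstacle is the asymptotic step: once the leading-order behaviour $-\ln(ct) \sim L/b$ is extracted cleanly, the rest of the argument is a routine continuity/compactness patching.
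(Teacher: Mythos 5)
Your proposal is correct and follows essentially the same route as the paper: both reparametrize via $\lambda=\chi^{-1}_{b,d}(t)$, reduce the claim to the ratio of logarithms $-\ln(c\lambda)/(-\ln(ct))\to b$ as $t\downarrow 0$ (giving the limiting constant $b^{bd}$), and then patch the bound over the rest of the interval by continuity and positivity on compact subintervals. The only difference is cosmetic: you cancel the power of $u$ first and work with the purely logarithmic inequality, whereas the paper manipulates the full ratio $\lambda/\bigl(t^{b}(-\ln(ct))^{bd}\bigr)$ directly.
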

\begin{proof}
The basic idea of this proof stems from the proof of Lemma~3.3 in \cite{Tautenhahn1998}.
Noticing that $\chi_{b,d}$ is continuous and strictly monotonically increasing with $\lim_{t\downarrow 0}\chi_{b,d}(t)=0$, we define $\chi^{-1}_{b,d}(t)=\lambda$, which can be written as
\begin{eqnarray*}
\chi^{-1}_{b,d}(t)=t^{b}(-\ln(c t))^{bd}\frac{\lambda}{t^{b}(-\ln(c t))^{bd}} .
\end{eqnarray*}
Substituting $t=\chi_{b,d}(\lambda)$ in the fraction, we derive
\begin{align*}
 \frac{\lambda}{t^{b}(-\ln (ct))^{bd}}
&=
\dfrac{\lambda}{\left(\lambda^{\frac{1}{b}}\left(-\ln (c\lambda)\right)^{-d}\right)^{b}\left(-\ln \left(c\lambda^{\frac{1}{b}}\left(-\ln (c\lambda)\right)^{-d}\right)\right)^{bd}} \\
&=
 \left[\dfrac{-\ln \left(c \lambda^{\frac{1}{b}}\left(-\ln (c\lambda)\right)^{-d}\right)}{-\ln (c\lambda)}\right]^{-bd} \\
&=
\left[\dfrac{\frac{1}{b}\ln (c\lambda)+\ln(c^{1-\frac{1}{b}}) -d \ln \left(-\ln (c\lambda)\right)}{\ln(c \lambda)}\right]^{-bd}\\
&=
\left[\dfrac{1}{b}+\dfrac{ \ln(c^{1-\frac{1}{b}})-d \ln \left(-\ln(c \lambda)\right)}{\ln(c \lambda)}\right]^{-bd}.
\end{align*}
Now as $\lambda\downarrow 0$, or equivalently $t\downarrow 0$,
we have 
\[
\lim_{\lambda\downarrow 0}\left[\frac{1}{b}+\frac{ \ln(c^{1-\frac{1}{b}})-d \ln \left(-\ln(c \lambda)\right)}{\ln(c \lambda)}\right]^{-bd}=b^{bd}.
\]
It follows 
\[
\chi^{-1}_{b,d}(t)
=t^{b}(-\ln (ct))^{bd}(b^{bd}+\mathrm{o}(1)) \qquad \text{as }t\downarrow 0.
\]
Arguing that $\chi^{-1}_{b,d}$ is continuous on the compact interval $[\varepsilon,\Vert G\Vert]$, for $ \varepsilon > 0 $ small, yields the existence of a constant $C_{5}>0$ such that
\[ 	
\chi^{-1}_{b,d}(t)
\geq C_{5}t^{b}(-\ln(c t))^{bd}, \qquad t\in (0,\Vert G\Vert].
\]
\end{proof}
\begin{lemma}[Lower bound for $\alpha_{\ast}$]\label{L:LowerEstAlphaAst}
Under Assumption \ref{Assumption}, there exist constants $C,\delta_{0}>0$ such that the regularization parameter $\alpha_{\ast}$ chosen according to Definition~\ref{D:Discrepancy} satisfies
\begin{eqnarray*}
\alpha_{\ast}\geq C \delta^{r} \left(- \ln (c\delta)\right)^{\kappa r} \qquad \text{for } \delta \leq \delta_{0},
\end{eqnarray*}
if item (b) of that definition applies.
\end{lemma}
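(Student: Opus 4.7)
The plan is to combine the right-hand inequality in the discrepancy principle with the residual bound from Lemma \ref{L:ZweiteAbsch}, and then to invert the resulting estimate using Lemma \ref{L:Tautenhahn}. Concretely, the definition gives $k\delta \leq \lVert Fu_{\gamma_\ast}^\delta - f^\delta\rVert$ for some $\gamma_\ast\in[\alpha_\ast,l\alpha_\ast]$, and the upper bound from Lemma \ref{L:ZweiteAbsch} is in exactly the right form (a power of $\alpha$ times $\varphi(\alpha)$) to match the function $\chi_{r,\kappa}$ from Lemma \ref{L:Tautenhahn} with $b=r$, $d=\kappa$.

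Before applying Lemma \ref{L:ZweiteAbsch} at $\alpha=\gamma_\ast$, I first need to secure $\gamma_\ast\leq\alpha_4$, equivalently $\alpha_\ast\leq\alpha_4/l$, for all sufficiently small $\delta$. I would argue this by contradiction using the monotonicity of $\alpha\mapsto\lVert Fu_\alpha^\delta-f^\delta\rVert$ noted in the remark after Definition \ref{D:Discrepancy}: if $\alpha_\ast\geq\alpha_4/l$ held along a sequence $\delta_n\downarrow 0$, then the left part of the discrepancy principle would force $\lVert Fu_{\alpha_4/l}^{\delta_n}-f^{\delta_n}\rVert\leq k\delta_n\to 0$; by the stability of the regularized solutions, $u_{\alpha_4/l}^{\delta_n}$ would converge in $X_1$ to a minimizer $u_{\alpha_4/l}^{0}\in X_1$ of the noise-free Tikhonov functional with $\lVert Fu_{\alpha_4/l}^{0}-f^\dagger\rVert=0$, and Assumption \ref{Assumption}(f) would then give $u_{\alpha_4/l}^{0}=u^\dagger\in X_1$, contradicting the oversmoothing hypothesis $u^\dagger\notin X_1$.

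Once $\gamma_\ast\leq\alpha_4$ is available, Lemma \ref{L:ZweiteAbsch} yields
\[
k\delta \leq \lVert Fu_{\gamma_\ast}^\delta-f^\delta\rVert \leq C_4\gamma_\ast^{1/r}\varphi(\gamma_\ast)+\delta,
\]
so $\tfrac{k-1}{C_4}\delta \leq \gamma_\ast^{1/r}(-\ln(c\gamma_\ast))^{-\kappa}=\chi_{r,\kappa}(\gamma_\ast)$. Since $\chi_{r,\kappa}$ is strictly increasing, Lemma \ref{L:Tautenhahn} applied with $b=r$, $d=\kappa$ gives
\[
\gamma_\ast \;\geq\; \chi_{r,\kappa}^{-1}\!\Bigl(\tfrac{k-1}{C_4}\delta\Bigr) \;\geq\; C_5\Bigl(\tfrac{k-1}{C_4}\Bigr)^{r}\delta^{r}\Bigl(-\ln\Bigl(c\tfrac{k-1}{C_4}\delta\Bigr)\Bigr)^{r\kappa}.
\]
Combining with $\alpha_\ast\geq\gamma_\ast/l$ and using $-\ln(c\cdot\text{const}\cdot\delta)/(-\ln(c\delta))\to 1$ as $\delta\downarrow 0$ to bound the log factor below by a fixed multiple of $(-\ln(c\delta))^{r\kappa}$ for $\delta\leq\delta_0$, one absorbs all multiplicative constants into a single $C>0$ and obtains the stated bound $\alpha_\ast\geq C\delta^{r}(-\ln(c\delta))^{\kappa r}$.

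The only real obstacle is the first step, i.e.\ verifying that $\gamma_\ast$ lies within the admissibility range $(0,\alpha_4]$ of Lemma \ref{L:ZweiteAbsch} for small $\delta$; the rest is a straightforward chaining of the two earlier lemmas together with asymptotic manipulation of logarithmic factors.
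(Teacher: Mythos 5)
Your main chain of estimates---the right-hand discrepancy inequality at $\gamma_{\ast}$, the residual bound of Lemma~\ref{L:ZweiteAbsch}, absorption of the additive $\delta$ using $k>1$, inversion via Lemma~\ref{L:Tautenhahn} with $b=r$, $d=\kappa$, the relation $\alpha_{\ast}\geq\gamma_{\ast}/l$, and the asymptotic equivalence $\ln(c\,\mathrm{const}\,\delta)\sim\ln(c\delta)$---is exactly the paper's argument for the regime $l\alpha_{\ast}\leq\alpha_{4}$. Where you diverge is the treatment of large $\alpha_{\ast}$, and here you have made the problem harder than it is: the paper does \emph{not} show that $\gamma_{\ast}\leq\alpha_{4}$ for small $\delta$. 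It simply splits into two cases and observes that if $l\alpha_{\ast}>\alpha_{4}$, then $\alpha_{\ast}>\alpha_{4}/l\geq(\alpha_{4}/l)\,\delta^{r}(-\ln(c\delta))^{\kappa r}$ as soon as $\delta$ is small enough that $\delta^{r}(-\ln(c\delta))^{\kappa r}\leq 1$, so the asserted lower bound holds trivially in that regime; the two cases are then merged by taking minima over the constants. Your contradiction argument for securing $\gamma_{\ast}\leq\alpha_{4}$ is a much heavier tool and, as written, is shaky: the ``stability'' item of the remark concerns perturbations of the data at a \emph{fixed} noise level, and since minimizers need not be unique it does not directly yield convergence of $u_{\alpha_{4}/l}^{\delta_{n}}$ in $X_{1}$ to a minimizer of the noise-free functional as $\delta_{n}\downarrow 0$; to make it rigorous you would instead use boundedness of $\lVert u_{\alpha_{4}/l}^{\delta_{n}}-\overline{u}\rVert_{1}$, weak compactness in $X_{1}$, weak sequential continuity of $F$, weak closedness of $\mathcal{D}(F)$, and the left inequality in Assumption~\ref{Assumption}(f) to force $u^{\dagger}\in X_{1}$. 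Moreover, this route invokes the standing oversmoothing hypothesis $u^{\dagger}\notin X_{1}$, which is not part of Assumption~\ref{Assumption} and is not needed for the lemma. In short, your proof can be completed along your lines within the paper's setting, but the ``only real obstacle'' you identify dissolves under the paper's one-line observation, which is both simpler and more general.
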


\begin{proof}
Let $ C_{4}$ and $\alpha_{4}$ be the constants from Lemma~\ref{L:ZweiteAbsch}. Further, let $k,l$, and $C_{5}$ be the constants from Definition~\ref{D:Discrepancy} and Lemma~\ref{L:Tautenhahn}, respectively. 
We consider the case $l\alpha_{\ast}\in(0,\alpha_{4}]$ first. With item $(b)$ of Definition \ref{D:Discrepancy} and Lemma~\ref{L:ZweiteAbsch}, we conclude that
\begin{align*}
k\delta \leq \lVert Fu_{\gamma_{\ast}}^{\delta}-f^{\delta}\rVert  \leq C_{4}\gamma_{\ast}^{\frac{1}{r}}\varphi(\gamma_{\ast})+\delta
&\leq C_{4}(l\alpha_{\ast})^{\frac{1}{r}}\varphi(l\alpha_{\ast})+\delta \\
&=C_{4}\chi_{r,\kappa}(l\alpha_{\ast})+\delta,
\end{align*}
where 
$
\chi_{r,\kappa},
$
defined as in (\ref{E:chi}), is monotonically non-decreasing owing to the constraint $l\alpha_{\ast}\leq\alpha_{4}\leq \lVert G\rVert $. Because $k>1$, we can write
\[
C_{4}^{-1}(k-1)\delta \leq \chi_{r,\kappa}(l\alpha_{\ast})
\]
and use the estimate in Lemma~\ref{L:Tautenhahn} to obtain
\[
l\alpha_{\ast}\geq C_{5}(C_{4}^{-1}(k-1)\delta)^{r}\left(- \ln (cC_{4}^{-1}(k-1)\delta)\right)^{\kappa r}, \qquad C_{4}^{-1}(k-1)\delta \leq\Vert G\Vert.
\]
Note that
\[
\ln(c\delta)\sim \ln (cC_{4}^{-1}(k-1)\delta) \qquad \text{as }\delta \downarrow 0,
\]
where the symbol $\sim$ indicates asymptotic equivalence in the sense of
\[
\lim_{\delta\downarrow 0}\frac{\ln(c\delta)}{ \ln (cC_{4}^{-1}(k-1)\delta)}=1.
\]
For this reason, we have
\[
\alpha_{\ast}\geq \frac{C_{5}}{l}\left(\frac{k-1}{C_{4}}\right)^{r} \delta^{r}\left(- \ln (c\delta)\right)^{\kappa r} 
\]
with $\delta$ small enough, say $\delta\leq t_{2}$ for some $t_{2} \in (0, \frac{C_{4}\Vert G\Vert}{k-1}]$.

In the case $l\alpha_{\ast}\in( \alpha_{4},\infty)$
the assertion still follows, because 
\begin{eqnarray*}
&l\alpha_{\ast}&> \alpha_{4}
\\
\Leftrightarrow
&\alpha_{\ast}&>\frac{\alpha_{4}}{l}
\geq
\frac{\alpha_{4}}{l} \delta^{r}\left(- \ln(c \delta)\right)^{\kappa r},
\end{eqnarray*}
for $\delta$ small enough, say $\delta\leq t_{3}$.
In conclusion, the assertion follows for $\delta\leq \delta_{0}$, with $\delta_{0}:=\min\left\{ t_{2},t_{3}\right\}$ and $C:=\min\left\{\frac{C_{5}}{l}\left(\frac{k-1}{C_{4}}\right)^{r},\frac{\alpha_{4}}{l}\right\}$.
\end{proof}

The lower bound of Lemma~\ref{L:LowerEstAlphaAst} allows us to verify the next estimate in Lemma~\ref{L:AbschNorm1}, which is essential for the proof of the main result given in Theorem \ref{T:MainTheorem}.
\begin{lemma}\label{L:AbschNorm1}
Let Assumption \ref{Assumption} be satisfied and suppose that $\alpha_{\ast}$ is determined through the discrepancy principle in Definition~\ref{D:Discrepancy}. Then, we can conclude that
\[
\lVert u_{\alpha_{\ast}}^{\delta}-\overline{u}\rVert_{1} = \Or\left(\delta^{-\frac{1}{a}}\varphi\left(\delta\right)^{\frac{r}{2}}\right) \qquad \text{as }\delta \downarrow 0.
\]
\end{lemma}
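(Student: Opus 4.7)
The plan is to apply Lemma~\ref{L:ZweiteAbsch} at $\alpha=\alpha_\ast$ and combine it with the lower bound from Lemma~\ref{L:LowerEstAlphaAst}, handling the degenerate cases of the discrepancy principle separately. If item~(a) of Definition~\ref{D:Discrepancy} applies, then $u_{\alpha_\ast}^\delta=\overline u$ and there is nothing to show. If item~(b) applies but $\alpha_\ast>\alpha_4$ (with $\alpha_4$ from Lemma~\ref{L:ZweiteAbsch}), I plan to use the crude estimate $T_{\alpha_\ast}^\delta(u_{\alpha_\ast}^\delta)\le T_{\alpha_\ast}^\delta(\overline u)=\|F\overline u-f^\delta\|^2$, which yields $\|u_{\alpha_\ast}^\delta-\overline u\|_1\le \|F\overline u-f^\delta\|/\sqrt{\alpha_4}=\Or(1)$; since $\delta^{-1/a}\varphi(\delta)^{r/2}\to\infty$ as $\delta\downarrow 0$, the asserted bound is then automatic.

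In the essential case $\alpha_\ast\in(0,\alpha_4]$, Lemma~\ref{L:ZweiteAbsch} gives, after dividing by $\sqrt{\alpha_\ast}$ and using $1/r-1/2=-1/(ra)$,
\[
\|u_{\alpha_\ast}^\delta-\overline u\|_1\le C_4\,\alpha_\ast^{-1/(ra)}\varphi(\alpha_\ast)+\delta\,\alpha_\ast^{-1/2}.
\]
Inserting the bound $\alpha_\ast\ge C\beta$ with $\beta=\beta(\delta)=\delta^r\varphi(\delta)^{-r}$ from Lemma~\ref{L:LowerEstAlphaAst}, and noting $1-r/2=-1/a$, the second summand is immediately dominated by a constant multiple of $\delta^{1-r/2}\varphi(\delta)^{r/2}=\delta^{-1/a}\varphi(\delta)^{r/2}$.

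To treat the first summand I intend to exploit monotonicity: a short computation of $\frac{\rmd}{\rmd\alpha}\ln g(\alpha)$ for $g(\alpha):=\alpha^{-1/(ra)}\varphi(\alpha)$ shows that $g$ is strictly decreasing on $(0,c^{-1}\rme^{-\kappa ra})$, and, after possibly shrinking $\alpha_4$, I may assume $g$ is decreasing on $(0,\alpha_4]$. Then $\alpha_\ast\ge C\beta$ yields $g(\alpha_\ast)\le g(C\beta)$, and since $-\ln(cC\beta)=-\ln(c\beta)-\ln C$ with $-\ln(c\beta)\to\infty$, the ratio $\varphi(C\beta)/\varphi(\beta)$ tends to $1$, so $g(\alpha_\ast)=\Or(\beta^{-1/(ra)}\varphi(\beta))$. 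The identity $\beta^{-1/(ra)}\varphi(\beta)=\Or(\delta^{-1/a}\varphi(\delta)^{r/2})$ is exactly item~$(C)$ established in the proof of Lemma~\ref{L:ABC}, and combining both summands yields the claim. The main obstacle worth flagging is that a naive substitution of the lower bound only inside the power $\alpha_\ast^{-1/(ra)}$, paired with the trivial bound $\varphi(\alpha_\ast)\le\varphi(\alpha_4)=\Or(1)$, would produce only $\Or(\delta^{-1/a}\varphi(\delta)^{1/a})$; because $1/a<r/2=(a+1)/a$ and $\varphi(\delta)\to 0$, this estimate is strictly worse than the target, and the monotonicity of $g$ is precisely what closes that gap.
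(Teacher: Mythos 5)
Your proof is correct and follows essentially the same route as the paper: divide the bound of Lemma~\ref{L:ZweiteAbsch} by $\sqrt{\alpha_{\ast}}$, exploit the monotonic decrease of $\alpha\mapsto\alpha^{-\frac{1}{ra}}\varphi(\alpha)$ near zero to insert the lower bound of Lemma~\ref{L:LowerEstAlphaAst}, and dispose of the cases $\alpha_{\ast}>\alpha_{4}$ and $\alpha_{\ast}=\infty$ by the crude estimate $T_{\alpha_{\ast}}^{\delta}(u_{\alpha_{\ast}}^{\delta})\leq T_{\alpha_{\ast}}^{\delta}(\overline{u})$. The only cosmetic differences are that you justify the monotonicity by an explicit derivative computation (the paper merely asserts it) and you recycle item \ref{EN:itemCProof} from the proof of Lemma~\ref{L:ABC} for the final asymptotics instead of recomputing the quotient directly.
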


\begin{proof}
Three cases have to be considered. In the first case, we consider $\alpha_{\ast}\in(0,\widetilde{\alpha}]$ with $\widetilde{\alpha}:=\min \left\{\alpha_{4},\lVert G\rVert \rme^{-ra\kappa }\right\}$, where $\alpha_{4}\leq \lVert G\rVert$ is the constant from Lemma~\ref{L:ZweiteAbsch}. The second case looks at $ \alpha_{\ast} \in (\widetilde{\alpha},\infty ) $, and the third case inspects the instance $\alpha_{\ast}=\infty$.
We start with the examination of the first case.
Rearranging the statement in Lemma~\ref{L:ZweiteAbsch}, we find out that
\[
\lVert u_{\alpha_{\ast}}^{\delta}-\overline{u}\rVert_{1} \leq C_{4}\alpha_{\ast}^{-\frac{1}{ra}}\varphi(\alpha_{\ast})+\frac{\delta}{\sqrt{\alpha_{\ast}}}, \qquad \alpha_{\ast}\leq\alpha_{4}.
\]
The term on the right-hand side is monotonically non-increasing for $\alpha_{\ast}\in \left(0,\lVert G\rVert \rme^{-ra\kappa}\right]$.
Thus, with the constraint $\alpha_{\ast}\leq \widetilde{\alpha}$, we derive by inserting the lower bound for $\alpha_{\ast}$ from Lemma~\ref{L:LowerEstAlphaAst} that
\begin{equation}\label{E:AbschNorm}
\lVert u_{\alpha_{\ast}}^{\delta}-\overline{u}\rVert_{1} \leq C_{4}\frac{\varphi\left( C\delta^{r}(-\ln(c\delta))^{\kappa r}\right)}{\left(C\delta^{r}(-\ln(c\delta))^{\kappa r}\right)^{\frac{1}{ra}}}+\frac{\delta }{ \sqrt{C\delta^{r}(-\ln(c\delta))^{\kappa r}}},
\end{equation}
with $\delta\leq\delta_{0}$ small enough. 
For the second summand in (\ref{E:AbschNorm}), we calculate
\begin{eqnarray*}
\frac{\delta }{ \sqrt{C\delta^{r}(-\ln(c\delta))^{\kappa r}}}
&=
C^{-\frac{1}{2}}\delta^{-\frac{1}{a}}\varphi(\delta)^{\frac{r}{2}}
=
\Or\left(\delta^{-\frac{1}{a}}\varphi\left(\delta\right)^{\frac{r}{2}}\right) \qquad \text{as }\delta \downarrow 0.
\end{eqnarray*}
For this reason, it is sufficient to verify the assertion for the fraction in the first summand of (\ref{E:AbschNorm}). 
The numerator yields 
\begin{align*}
 \varphi\left( C\delta^{r}(-\ln(c\delta))^{\kappa r}\right)
&=
\left[-\ln\left(cC\delta^{r}(-\ln(c\delta))^{\kappa r}\right)\right]^{-\kappa}\\
&=
\left[-\ln (c^{1-r}C)-r\ln(c\delta)-\kappa r\ln\left(-\ln(c\delta)\right) \right]^{-\kappa}\\
&=
\varphi(\delta)\left[\frac{\ln (c^{1-r}C)}{\ln(c\delta)}+r+\kappa r\frac{\ln\left(-\ln(c\delta)\right)}{\ln(c\delta)} \right]^{-\kappa} 
=
\varphi(\delta)g(\delta),
\end{align*}
with 
\[
g(\delta):=\left[\frac{\ln (c^{1-r}C)}{\ln(c\delta)}+r+\kappa r\frac{\ln\left(-\ln(c\delta)\right)}{\ln(c\delta)} \right]^{-\kappa}.
\]
For the denominator, we observe
\begin{eqnarray*}
\left(C\delta^{r}(-\ln(c\delta))^{\kappa r}\right)^{\frac{1}{ra}}
&=
C^{\frac{1}{ra}}\delta^{\frac{1}{a}}(-\ln(c\delta))^{\frac{\kappa}{a}}
=
C^{\frac{1}{ra}}\delta^{\frac{1}{a}}\varphi(\delta)^{-\frac{1}{a}}.
\end{eqnarray*}
Putting the components of the fraction together, we arrive at 
\[
C^{-\frac{1}{ra}}\delta^{-\frac{1}{a}}\varphi(\delta)^{\frac{1}{a}+1}g(\delta)=C^{-\frac{1}{ra}}\delta^{-\frac{1}{a}}\varphi(\delta)^{\frac{r}{2}}g(\delta),
\] 
where $\lim_{\delta\downarrow 0}g(\delta)=r^{-\kappa}$. Hence, we conclude 
\[
\lVert u_{\alpha_{\ast}}^{\delta}-\overline{u}\rVert_{1}
=\Or\left( \delta^{-\frac{1}{a}}\varphi(\delta)^{\frac{r}{2}}\right) \qquad \text{as }\delta\downarrow 0.
\]
Next, let $\alpha_{\ast}\in( \widetilde{\alpha},\infty)$.
Since 
\begin{eqnarray*}
\lVert F u_{\alpha_{\ast}}^{\delta}-f^{\delta}\rVert^{2}+\alpha_{\ast}\lVert u_{\alpha_{\ast}}^{\delta}-\overline{u}\rVert_{1}^{2} =T_{\alpha_{\ast}}^{\delta}(u_{\alpha_{\ast}}^{\delta})\leq T_{\alpha_{\ast}}^{\delta}(\overline{u})
=\lVert F \overline{u}-f^{\delta}\rVert^{2},
\end{eqnarray*}
we have
\[
\lVert u_{\alpha_{\ast}}^{\delta}-\overline{u}\rVert_{1}\leq \sqrt{\frac{1}{\alpha_{\ast}}}\lVert F \overline{u}-f^{\delta}\rVert=\Or(1) \qquad \text{as } \delta \downarrow 0.
\]
Thus especially
\[
 \lVert u_{\alpha_{\ast}}^{\delta}-\overline{u}\rVert_{1} =\Or\left( \delta^{-\frac{1}{a}}\varphi(\delta)^{\frac{r}{2}}\right)\qquad \text{as } \delta\downarrow 0,
\]
because 
$ \lim_{\delta\downarrow 0}\delta^{-\frac{1}{a}}\varphi(\delta)^{\frac{r}{2}}=\infty$. 

The third case $\alpha_{\ast}=\infty$ coincides with item $(a)$ of Definition \ref{D:Discrepancy}, respectively with the choice of $u_{\alpha_{\ast}}^{\delta}=\overline{u}$.
Thereupon, the claim of the theorem is an immediate consequence.
\end{proof}

\section{Main result}\label{S:MainResult}
With the previous results, we have everything at hand to prove our main result, which expresses the convergence rate of the discrepancy principle in the given setting. This convergence rate is asymptotically optimal according to \cite{Hohage2000}. Further, the same convergence rate is attained for an a priori parameter choice
\cite{HofmannPlato2020}.
\begin{theorem}[Main theorem]\label{T:MainTheorem}
Under Assumption \ref{Assumption}, the bound
\begin{eqnarray*}
\lVert u_{\alpha_{\ast}}^{\delta}-u^{\dagger}\rVert=
 \Or\left( \varphi(\delta)\right) \qquad \text{as } \delta\downarrow 0
\end{eqnarray*}
holds for $\alpha_{\ast}$ selected by the discrepancy principle specified in Definition \ref{D:Discrepancy}.
\end{theorem}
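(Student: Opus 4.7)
The plan is to reduce the statement to an interpolation argument between the $X_{-a}$ norm (controlled by the discrepancy principle) and the $X_{1}$ norm (controlled by Lemma~\ref{L:AbschNorm1}), comparing $u_{\alpha_{\ast}}^{\delta}$ to the auxiliary element $\hat{u}_{\beta}$ from Lemma~\ref{L:ABC} rather than to $u^{\dagger}$ directly. Concretely, with $\beta=\delta^{r}(-\ln(c\delta))^{\kappa r}$, I would start from the triangle inequality
\[
\lVert u_{\alpha_{\ast}}^{\delta}-u^{\dagger}\rVert
\leq
\lVert u_{\alpha_{\ast}}^{\delta}-\hat{u}_{\beta}\rVert
+\lVert \hat{u}_{\beta}-u^{\dagger}\rVert,
\]
and dispatch the second term immediately using item \ref{En:itemiLemmaABC} of Lemma~\ref{L:ABC}, which already yields $\mathrm{O}(\varphi(\delta))$.

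For the first term, I would invoke the interpolation inequality of Lemma~\ref{L:InterpolationInequal} with $p=1$, $q=0$, $s=-a$, hence $\psi=1/(1+a)$ and $1-\psi=a/(1+a)$:
\[
\lVert u_{\alpha_{\ast}}^{\delta}-\hat{u}_{\beta}\rVert
\leq
\lVert u_{\alpha_{\ast}}^{\delta}-\hat{u}_{\beta}\rVert_{1}^{a/(1+a)}
\lVert u_{\alpha_{\ast}}^{\delta}-\hat{u}_{\beta}\rVert_{-a}^{1/(1+a)}.
\]
To control the $-a$ factor, I would combine the left-hand side of the nonlinearity condition~(\ref{E:nonlinearityCondition}) with the upper inequality in the discrepancy principle (Definition~\ref{D:Discrepancy}) to get $\lVert u_{\alpha_{\ast}}^{\delta}-u^{\dagger}\rVert_{-a}=\mathrm{O}(\delta)$, and merge this with item \ref{En:itemiiLemmaABC} of Lemma~\ref{L:ABC} via the triangle inequality. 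To control the $X_{1}$ factor, I would combine Lemma~\ref{L:AbschNorm1} and item \ref{En:itemiiiLemmaABC} of Lemma~\ref{L:ABC}, obtaining $\lVert u_{\alpha_{\ast}}^{\delta}-\hat{u}_{\beta}\rVert_{1}=\mathrm{O}(\delta^{-1/a}\varphi(\delta)^{r/2})$.

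Substituting these bounds into the interpolation estimate gives
\[
\lVert u_{\alpha_{\ast}}^{\delta}-\hat{u}_{\beta}\rVert
=
\mathrm{O}\!\left(\bigl(\delta^{-1/a}\varphi(\delta)^{r/2}\bigr)^{a/(1+a)}\delta^{1/(1+a)}\right),
\]
and the exponents simplify precisely because $r=(2a+2)/a$: the two powers of $\delta$ cancel, and the $\varphi(\delta)$ factor exponent becomes $ra/(2(1+a))=1$, so the first term is also $\mathrm{O}(\varphi(\delta))$. Combining both contributions finishes the proof; the case $\alpha_{\ast}=\infty$ is subsumed, since Lemma~\ref{L:AbschNorm1} treats it uniformly and the discrepancy inequality $\lVert F\overline{u}-f^{\delta}\rVert\leq k\delta$ still yields $\lVert u_{\alpha_{\ast}}^{\delta}-u^{\dagger}\rVert_{-a}=\mathrm{O}(\delta)$ via~(\ref{E:nonlinearityCondition}).

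The only subtle step is verifying that the interpolation exponents match the $\delta$- and $\varphi(\delta)$-powers supplied by the two auxiliary bounds so that the cancellation occurs; this is the arithmetic reason behind the choice of $r$. Everything else is a mechanical assembly of the lemmas established in Sections~3 and~4.
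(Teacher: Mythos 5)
Your proposal is correct and follows essentially the same route as the paper's proof: the same triangle-inequality decomposition through $\hat{u}_{\beta}$, the same interpolation between the $X_{-a}$ and $X_{1}$ norms with exponents $\frac{1}{a+1}$ and $\frac{a}{a+1}$, and the same inputs (discrepancy bound plus the left-hand side of (\ref{E:nonlinearityCondition}) for the $-a$ factor, Lemma~\ref{L:AbschNorm1} and Lemma~\ref{L:ABC} for the $X_{1}$ factor). The exponent cancellation you verify is exactly the computation that closes the paper's argument.
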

\begin{proof}
Using the triangle and interpolation inequality (\ref{E:InterpolationInequality}), we obtain
\begin{align*}
\lVert u_{\alpha_{\ast}}^{\delta}-u^{\dagger}\rVert
&\leq
\lVert u_{\alpha_{\ast}}^{\delta}-\hat{u}_{\beta}\rVert
+ \lVert \hat{u}_{\beta}-u^{\dagger}\rVert \\
&\leq
\underset{=:\textit{I}}{\underbrace{\lVert u_{\alpha_{\ast}}^{\delta}-\hat{u}_{\beta}\rVert_{-a}}}^{\frac{1}{a+1}}
\underset{=:\textit{II}}{\underbrace{\lVert u_{\alpha_{\ast}}^{\delta}-\hat{u}_{\beta}\rVert_{1}}}^{\frac{a}{a+1}}
+ \underset{=:\textit{III}}{\underbrace{\lVert \hat{u}_{\beta}-u^{\dagger}\rVert}},
\end{align*}
where the auxiliary element $\hat{u}_{\beta}$ is given as in Lemma~\ref{L:ABC}. 
First, we establish a bound for the expression in \textit{I}. The triangle inequality yields
\begin{eqnarray}\label{E:2}
\textit{I}=\lVert u_{\alpha_{\ast}}^{\delta}-\hat{u}_{\beta}\rVert_{-a}
\leq
\lVert u_{\alpha_{\ast}}^{\delta}-u^{\dagger}\rVert_{-a}
+
\lVert u^{\dagger}-\hat{u}_{\beta}\rVert_{-a}.
\end{eqnarray}
Now, the left-hand side of 
(\ref{E:nonlinearityCondition}) and the triangle inequality
are used for the first summand in (\ref{E:2}):
\[
\lVert u_{\alpha_{\ast}}^{\delta}-u^{\dagger}\rVert_{-a}
\leq \frac{1}{c_{a}}\lVert Fu_{\alpha_{\ast}}^{\delta}-f^{\dagger}\rVert
\leq \frac{1}{c_{a}}\left( \lVert Fu_{\alpha_{\ast}}^{\delta}-f^{\delta}\rVert+\delta \right).
\]
According to the discrepancy principle (\ref{E:DP}), we have $\lVert Fu_{\alpha_{\ast}}^{\delta}-f^{\delta}\rVert=\Or(\delta)$ as $\delta \downarrow 0$.
Thus
\[
\lVert u_{\alpha_{\ast}}^{\delta}-u^{\dagger}\rVert_{-a}=\Or(\delta) \qquad \text{as }\delta \downarrow 0.
\]
For the second summand in (\ref{E:2}),
item $\ref{En:itemiiLemmaABC}$ of Lemma~\ref{L:ABC} yields
\[
\lVert u^{\dagger}-\hat{u}_{\beta}\rVert_{-a}=\Or(\delta) \qquad \text{as } \delta \downarrow 0.
\]
It follows 
\[
\textit{I}=\Or(\delta) \qquad \text{as } \delta \downarrow 0.
\]
Next, we estimate the expression in \textit{II}. The triangle inequality yields
\begin{eqnarray}\label{E:3}
\textit{II}=\lVert u_{\alpha_{\ast}}^{\delta}-\hat{u}_{\beta}\rVert_{1} 
\leq
\lVert u_{\alpha_{\ast}}^{\delta}-\overline{u}\rVert_{1} 
+\lVert \overline{u}-\hat{u}_{\beta}\rVert_{1}.
\end{eqnarray}
For the first summand in (\ref{E:3}), it follows from Lemma~\ref{L:AbschNorm1} that
\[
\lVert u_{\alpha_{\ast}}^{\delta}-\overline{u}\rVert_{1} = \Or\left(\delta^{-\frac{1}{a}}\varphi\left(\delta\right)^{\frac{r}{2}}\right) \qquad \text{as }\delta \downarrow 0.
\]
For the second summand in (\ref{E:3}), item $\ref{En:itemiiiLemmaABC}$ of Lemma~\ref{L:ABC} gives
\[
\lVert \overline{u}-\hat{u}_{\beta}\rVert_{1}=\Or\left(\delta^{-\frac{1}{a}}\varphi(\delta)^{\frac{r}{2}}\right) \qquad \text{as } \delta \downarrow 0.
\]
Hence,
\[
\mathit{II}=\Or\left(\delta^{-\frac{1}{a}}\varphi(\delta)^{\frac{r}{2}}\right)=\Or\left(\delta^{-\frac{1}{a}}\varphi(\delta)^{\frac{a+1}{a}}\right) \qquad \text{as } \delta \downarrow 0.
\]
Finally, an estimate for the term in \textit{III} is given by item $\ref{En:itemiLemmaABC}$ of Lemma~\ref{L:ABC}:
\[
\mathit{III}=\lVert \hat{u}_{\beta}-u^{\dagger}\rVert=\Or\left(\varphi(\delta)\right) \qquad \text{as } \delta \downarrow 0.
\]
Putting the bounds together, the claim follows:
\begin{align*}
\lVert u_{\alpha_{\ast}}^{\delta}-u^{\dagger}\rVert
&\leq
\Or(\delta)^{\frac{1}{a+1}}\Or\left(\delta^{-\frac{1}{a}}\varphi(\delta)^{\frac{a+1}{a}}\right)^{\frac{a}{a+1}}+\Or\left(\varphi(\delta)\right) \\
&=
\Or\left(\varphi(\delta)\right) \ \text{ as } \delta \downarrow 0.
\end{align*}
\end{proof}

\section{Numerical results}\label{S:NumericalResults}
In this section, we present the numerical results realized by an algorithm that is practicable for the discrepancy principle given in Definition \ref{D:Discrepancy}.
The algorithm and the example are taken from \cite{HofmannPlato2020}.
For convenience, the example and the algorithm are reproduced below.

In the space $\ell_{2}=\left\{\left(u_{n}\right) \vert \ \lVert u\rVert_{\ell_{2}}^{2}=\sum_{n=1}^{\infty}u_{n}^{2} <\infty\right\}$, we consider the nonlinear operator \[
F:\ell_{2} \supset \mathcal{D}(F)\rightarrow \ell_{2}, \qquad (u_n)\mapsto 7\left(\frac{u_{n}}{n}\right)+\left(\frac{u_{n}^{2}}{n}\right),
\]
with domain
\[\mathcal{D}(F)=\left\{u \in \ell_{2} \vert \ \lVert u\rVert_{\ell_{2}}\leq 3 \right\}.\]
The operator 
\[
B:\ell_{2} \supset \mathcal{D}(B) \rightarrow \ell_{2}, \qquad (u_{n})\mapsto (nu_{n})
\]
with domain $\mathcal{D}(B)=\left\{(u_{n}) \vert \ (nu_{n})\in \ell_{2}\right\}$
generates the stronger norm $\lVert \cdot \rVert_{1}$.
We investigate the operator equation $Fu=f^{\dagger}$ with the solution
\[
u^{\dagger}=(u_{n}^{\dagger}), \qquad u_{1}^{\dagger}=1, \qquad u_{n}^{\dagger}=\frac{1}{\sqrt{n}(\ln (0.9^{-0.25} n))^{2.31}}, \qquad n\geq 2.
\]
It can be verified that this example satisfies Assumption \ref{Assumption}. Further, we have $u^{\dagger} \notin \mathcal{D}(B)$, and with $\overline{u}=0$, the source element
\[
 w=(w_{n}), \ \text{with } w_{1}=(-\ln(c))^{\kappa} \text{ and } w_{n}=\frac{4^{\kappa}}{\sqrt{n}(\ln (c^{-0.25}n))^{0.51}}, \qquad n\geq 2
\]
satisfies source condition (\ref{E:SourceCondition}) for $c=0.9$ and $\kappa=1.8$. 

The algorithm is implemented under the details stated below.
\begin{itemize}
\item To enable numerical computations, the space $\ell_{2}$ is discretized as $\mathbb{R}^{N}$ for $N=6000$.
\item The data $f^{\delta}$ is perturbed in the following way:
\[
f_{n}^{\delta}=f_{n}+\Delta_{n}, \qquad n=1,\dots,N,
\]
by uniformly distributed random values $\Delta_n$ satisfying $\vert \Delta_n\vert \leq \delta/\sqrt{N}$.
\item The initial constants in Algorithm \ref{Algo} are chosen as $\alpha^{(0)}=0.9$, $\theta=10$, and $k=3$.
\end{itemize}
\begin{algorithm}[H]\label{Algo}
\caption{Sequential discrepancy principle}
\LinesNumbered
\KwResult{Parameter $\alpha_{\ast}$ selected by the discrepancy principle in Definition \ref{D:Discrepancy}.}
Choose an initial guess $\alpha^{(0)}$ and constants $\theta,k>1$\;
\If{$\lVert Fu_{\alpha^{(0)}}^{\delta}-f^{\delta}\rVert \geq k\delta$}{\Repeat{$\lVert Fu_{\alpha^{(i)}}^{\delta}-f^{\delta}\rVert \leq k\delta \leq \lVert Fu_{\alpha^{(i-1)}}^{\delta}-f^{\delta}\rVert $ \mbox{$\mathrm{for \ the\ first\ time}$}}{$\alpha^{(i)}=\theta^{-i}\alpha^{(0)}$, $i=1,2,\dots$, \\ determine $u_{\alpha^{(i)}}^{\delta}$ and $\lVert Fu_{\alpha^{(i)}}^{\delta}-f^{\delta}\rVert$}}
set $\alpha_{\ast}=\alpha^{(i)}$;\\
\If{$\lVert Fu_{\alpha^{(0)}}^{\delta}-f^{\delta}\rVert \leq k\delta$}{\Repeat{$\lVert Fu_{\alpha^{(i-1)}}^{\delta}-f^{\delta}\rVert \leq k\delta \leq \lVert Fu_{\alpha^{(i)}}^{\delta}-f^{\delta}\rVert$  \mbox{$\mathrm{for \ the\ first\ time}$}}{$\alpha^{(i)}=\theta^{i}\alpha^{(0)}$, $i=1,2,\dots$, \\ determine $u_{\alpha^{(i)}}^{\delta}$ and $\lVert Fu_{\alpha^{(i)}}^{\delta}-f^{\delta}\rVert$}}
set $\alpha_{\ast}=\alpha^{(i-1)}$;\\
\Return{$\alpha_{\ast}$} 
\end{algorithm}
\vspace{0.3cm}

\begin{table}[H]
\caption{Numerical results of Algorithm \ref{Algo}}\label{Table}
\begin{center}
\begin{tabular}{@{}ccccc}
\toprule
$\mathbf{\delta}$  & $\alpha_{\ast}$ &$\mathbf{\lVert u_{\alpha_{\ast}}^{\delta}-u^{\dagger}\rVert_{\ell_{2}}}$ &$\varphi(\delta)$ & $\frac{\mathbf{\lVert u_{\alpha_{\ast}}^{\delta}-u^{\dagger}\rVert_{\ell_{2}}}}{\varphi(\delta)}$ \\
\midrule
$1.000\cdot 10^{-3} $& $9\cdot 10^{-6}$ &	$0.039\thinspace932$	& 0.0300 & 1.3303 \\
$5.000\cdot 10^{-4}$&$9\cdot 10^{-6}$&		$ 0.039\thinspace935 $ & 0.0253 &1.5764\\	
$ 2.500\cdot 10^{-4}$&$9\cdot 10^{-7}$& $0.030\thinspace659 $&  0.0217  &   1.4132  \\
$1.250\cdot 10^{-4}$&$9\cdot 10^{-8}$ & $ 0.024\thinspace008$ & 0.0188 &  1.2764 \\
$6.250 \cdot 10^{-5}$& $9\cdot 10^{-9}$ & $ 0.018\thinspace967 $ & $ 0.0165$ &   1.1509\\
$3.125\cdot 10^{-5}$&  $9\cdot 10^{-10}$ & $ 0.014\thinspace946$ &$ 0.0146$ &  1.0259 \\
$ 1.563\cdot 10^{-5}$&$9\cdot 10^{-11}$ & $ 0.011\thinspace578$&$ 0.0130$ &  0.8918 \\
$7.813\cdot 10^{-6}$&$9\cdot 10^{-12}$& $ 0.008\thinspace572$ &$ 0.0116 $ & 0.7358\\
$ 3.906\cdot 10^{-6}$&$9\cdot 10^{-12}$& $ 0.008\thinspace572$& $0.0105$ &  0.8150\\
$ 1.953\cdot 10^{-6}$&$9\cdot 10^{-13}$& $ 0.005\thinspace622$ &$0.0095$ &  0.5888\\
\bottomrule
\end{tabular}
\end{center}
\end{table}

Table \ref{Table} presents the results for Algorithm \ref{Algo}. The second column illustrates the calculated values of $\alpha_{\ast}$ for the different noise levels in Column 1. The third and the fifth column confirm the assertion of Theorem \ref{T:MainTheorem}.

\section*{Acknowledgement}
This paper was created as part of the authors’ DFG-Project No.~453804957, supported
by the German Research Foundation under grant PL 182/8-1.

\end{document}